\tikzset{
	mynode/.style={fill,circle,inner sep=2pt,outer sep=0pt}
}\usepackage{tikz}
\tikzset{
	mynode/.style={fill,circle,inner sep=2pt,outer sep=0pt}
}
\newtheorem{thm}{Theorem}
\theoremstyle{theorem}
\newcommand*{\rom}[1]{\expandafter\@slowromancap\romannumeral #1@}
\def\bege{\begin{equation}} \def\ende{\end{equation}}
\def\begr{\begin{eqnarray}}
	\def\endr{\end{eqnarray}} 
\def\bege{\begin{equation}} \def\ende{\end{equation}}
\def\begr{\begin{eqnarray}} \def\endr{\end{eqnarray}}
\def\bnum{\begin{enumerate}} \def\enum{\end{enumerate}}
\begin{document}
 \begin{center}
\Large{\textbf{On $k$-distance degree based topological indices of benzenoid systems}}
\end{center}
\begin{center}
Sohan Lal $^{1}$, Karnika Sharma$^{2}$, Vijay Kumar Bhat$^{3,}$$^{\ast}$
\end{center}

\begin{center}
School of Mathematics,\end{center} \begin{center}Shri Mata Vaishno
Devi University,\end{center}\begin{center}Katra-$182320$, Jammu and
Kashmir, India.
\end{center}
\begin{center}
1. sohan1993sharma@gmail.com 2. karnikasharma069@gmail.com 3. vijaykumarbhat2000@yahoo.com \underline{}
\end{center}
\vspace*{0.6cm}
\textbf{Abstract:}
Topological indices are graph invariants numeric quantities, which are utilized by researchers to analyze a variety of physiochemical aspects of molecules. The goal of developing topological indices is to give each chemical structure a numerical value while maintaining the highest level of differentiation. Using these indices, the classification of various structures, and their physiochemical and biological properties can be predicted. In this paper, the leap and leap hyper Zagreb indices, as well as their polynomials for a zigzag benzenoid system $Z_{p}$ and a rhombic benzenoid system $R_{p}$ are determined. In addition, new $k$-distance degree-based topological indices such as leap-Somber index, hyper leap forgotten index, leap $Y$ index, and leap $Y$ coindex are also computed for the molecular graphs of $Z_p$ and $R_p$. Furthermore, their numerical computation and discussion are performed to determine the significance of their physiochemical properties.\\

\noindent \textbf{Keywords:} Topological indices, $k$-degree distance, zigzag benzenoid, rhombic benzenoid system.\\\\
\noindent \textbf{2020 MSC Classification:}	05C10, 05C12, 05C90

\section{Introduction}

\noindent Michael Faraday \cite{a} was the first person to isolate and identify benzene. Benzene is a very poisonous parent aromatic chemical with a colourless appearance and a distinct odour. Benzene is one of the top 20 compounds in terms of production volume, with applications including synthetic fibres, dyes, plastic, resins, polystyrene, medicines, and insecticides. The most common use of benzene is in the production of phenol. The toxicity of benzene was discovered in the blood-forming organs soon after it was first used in industry \cite{22}. Since its discovery, benzene's structure has attracted great interest. Despite several research efforts, there are still lots of questions concerning how to use benzene effectively. The discovery of benzene by Michael Faraday \cite{a} in 1825, growing the ability to interact with the matter of benzene.\\

\noindent A chemical compound that contains at least one benzene ring is called benzenoid and is typically used as an intermediary in the synthesis of other chemicals. A benzenoid is a colorless aromatic chemical compound that comprises of at least one benzene ring. Benzenoid hydrocarbons are important in the food and chemical industries, as well as in our environment. Benzenoid molecular graphs are systems in which hydrogen atoms have been removed. It is a connected geometric shape made up of congruent regular hexagons arranged in a plane so that two hexagons are either disjoint or share a common edge see Figure $1$. A hexagonal system's vertex can only belong to three hexagons. A vertex that is shared by three hexagons is an internal vertex. As benzenoid compounds become more important in practically all production processes across a wide range of industries, so the problem of computing benzenoid topological indices has attracted the interest of experts.\\

\begin{figure}[h!]
  \centering
  \includegraphics[width=3.5in]{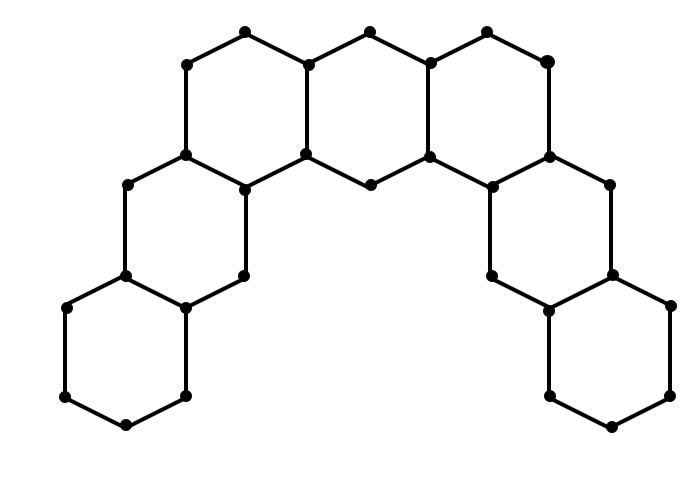}
  \caption{Benzenoid System}\label{Figure1}
\end{figure}

\noindent In this work, we exclusively look at molecular graphs. Molecular graphs are a visual representation of a chemical molecule, with vertices representing atoms and edges representing bonds between the atoms. Graph invariants connected with the graphs related to the chemical molecule can be used to study the physicochemical properties of a molecule. Topological index is one of these graph invariants. A topological index is a real number that is associated with a graph and is determined by a rule that is invariant for isomorphic graphs. It describes the molecular structure's topology and has advanced chemical applications in $QSPR\setminus QSAR$ studies \cite{kk,kkk}. In $1947$, chemist Harold Wiener invented the Wiener index \cite{27}, which began the history of topological indices. In the literature, many topological indices have been defined. Few of them are the Zagreb indices $Z(G)$ \cite{29}, the $F$-index $F(G)$ \cite{17}, Hyper $F$-index $HF(G)$ \cite{17}, and the Somber index $SO(G)$ \cite{ss}. Some recently introduced topological indices are $Y$-index $Y(G)$ \cite{16}, $Y$-coindex $\overline{Y}(G)$ \cite{21}, second hyper-Zagreb index $HM_{2}(G)$ and its coindex $\overline{HM_{2}}(G)$ \cite{16} and are defined as follows:\\

\begin{eqnarray*}
F(G)&=& \sum_{u \in V(G)}deg_{G}^{3}(u)\\
    &=& \sum_{uv\in E(G)}[deg_{G}^{2}(u)+deg_{G}^{2}(v)]\\
HF(G)&=& \sum_{uv\in E(G)}[deg_{G}^{2}(u)+deg_{G}^{2}(v)]^{2}\\
Y(G)&=& \sum_{u \in V(G)}deg_{G}^{4}(u)\\
    &=& \sum_{uv\in E(G)}[deg_{G}^{3}(u)+deg_{G}^{3}(v)]\\
  \overline{Y}(G) &=& (n-1)F(G)-Y(G)\\
  HM_{2}(G) &=& \sum_{uv\in E(G)}deg_{G}^{2}(u)deg_{G}^{2}(v) \\
   \overline{HM_{2}}(G) &=& \sum_{uv\notin E(G)}deg_{G}^{2}(u)deg_{G}^{2}(v)\\
   SO(G)&=& \sum_{uv\in E(G)}\sqrt{deg_{G}(u)^2+deg_{G}(v)^2}
\end{eqnarray*}
\noindent Wiener proposed the first distance based topological index in 1947 \cite{27} while researching on the paraffin breaking point, whereas Platt proposed the first degree based topological index in 1947 for predicting physical features of alcanes \cite{28}. The Zagreb index serves as the foundation for the first-order connectivity index. More often than any other topological indicator, the connectedness index and its variations are utilised in QSPR and QSAR \cite{A, B, C}. The Zagreb indices and their variations have recently been used to explore chirality, chemical complexity, heterosystems, and ZE-isomerism, while the overall Zagreb indices showed promise for application in the development of multilinear regression models. Numerous researchers also employ Zagreb indices in their QSPR and QSAR investigations \cite{A, B, C}. Interested readers can refer to \cite{18} for more information about Platt index and Zagreb indices.\\ 

\noindent In \cite{22} Gao et al. calculated exact formulas for the Zagreb and hyper-Zagreb indices of Carbon Nanocones $CNC_{k}[n]$, and defined a new degree-based topological index called the second hyper-Zagreb index. Nazeran et al. \cite{44} gave the exact relations for first and second Zagreb index, hyper Zagreb index, multiplicative
Zagreb indices as well as first and second Zagreb polynomials for some benzenoid systems.\\

\noindent Fartula et al. \cite{17} established a forgotten topological index and its properties. The prediction power of the forgotten index was examined using a dataset of octane isomers, in accordance with advice from the International Academy of Mathematical Chemistry. The forgotten index does not identify heteroatoms and multiple bonds in its most basic form, that's why this dataset was chosen. Boiling point, melting point, heat capacities, entropy, density, heat of vaporisation, enthalpy of formation, motor octane number, molar refraction, acentric factor, total surface area, octanol-water partition coefficient, and molar volume are among the properties included in the octane dataset. Each of these attributes was connected with the forgotten index, and the outcomes were compared to those attained using the first Zagreb index. It was discovered that the forgotten-predictive index's power is pretty comparable to first zagreb index. For the entropy and acentric factor they produce correlation coefficients of more than 0.95. On the other hand, neither first zagreb nor forgotten index provide a satisfactory correlation for other physicochemical properties.\\
 
\noindent In \cite{ss}, Gutman listed 26 topological indices, two of which were Zagreb indices. In the same publication, Gutman introduced a novel method for computing the (molecular) graphs' vertex-degree-based topological index. Moreover, he found that the theory of vertex-degree-based topological indices had never employed the function $F(x,y) = \sqrt{x^2+y^2}$ and introduced a somber index as\\
\begin{eqnarray*}
	SO(G)&=& \sum_{uv\in E(G)}\sqrt{deg_{G}(u)^2+deg_{G}(v)^2}
\end{eqnarray*}
Motivated by the Gutman's work, K. C. Das et al., \cite{tt} obtained some mathematical properties like upper and lower bounds on the somber index and characterized extremal graphs. Also, they determine the relation between the somber index and the first and second Zagreb indices. The somber indices have strong predictive applicability in mathematical chemistry. A. Ulker et al. \cite{A} obtained bounds for graph energy in terms of its Sombor index and provided the relation between Sombor index and graph energy for some graph classes. After the introduction of somber index a lot of research can be done on properties of the somber index. For basic, general, and mathematical properties of somber index see \cite{uu,vv}. Chemical applicability of somber index was obtained in \cite{ww}.\\

\noindent One of the most significant Topological coindices in quantitative structure-property relationships is the Y-coindex (QSPR). Without conducting a chemical experiment, Y-coindex can provide medical information about some new medications. One of the best correlation indices for understanding the physicochemical characteristics of octane isomers is the Y-coindex. To know more about the Y-coindex see \cite{21,16}.\\

\noindent Naji et al., introduced $k$-distance degree based topological indices in \cite{xx} and presented a explicit formulae for a cartesian product of a graphs which were applicable to calculate $k$-distance degree index of some chemical and computer science based graphs like hypercube, hamming graphs, nanotubes and nanotori. Naji et al. \cite{1} proposed the leap Zagreb indices of a graph $G$, which are $k$-distance degree based topological indices that are based on the second degree of vertices. They also identify the chemical applications of first leap Zagreb index. Moreover, they found that this index has very good correlation with physical properties of chemical compounds like boiling point, entropy, DHVAP, HVAP and accentric factor. One can refer to \cite{6,15} for further information on Naji et al's work on the leap Zagreb indices.\\

\noindent The idea of $k$-distance has been applied in many other applicable settings. Multi-hop generalized core percolation on complex networks ia a key reference \cite{yy} as the multi-hop idea is same like the $k$-distance. The problem of computing distance-degree based topological indices has attracted the interest of researchers for many years. J. M. Zhu et al. \cite{31} established a lower bound on the third leap Zagreb index for trees. Z. Shao et al. \cite{5} derived leap Zagreb indices for trees and unicyclic networks. Chidambaram et. al \cite{kkkk} computed the leap Zagreb indices of bridge and chain graphs. For some recent work on topological indices, interested readers can refer to \cite{21,1,16,40}. Despite the work that has already been done on $k$-distance degree index, there is still much research to be done.\\

\noindent The above-mentioned study and applications of topological indices in the field chemistry and biology inspired us to develop some newly $k$-distance degree based topological indices such as leap somber index, hyper leap forgotten index, leap $Y$ index, and leap $Y$ coindex. Therefore, in this paper, we compute the leap and hyper leap Zagreb indices, as well as their polynomials for the molecular graph of zigzag benzenoid system $Z_{p}$ and rhombic benzenoid system $R_{p}$. In addition, the leap somber index, hyper leap forgotten index, leap $Y$ index, and leap $Y$ coindex are also calculated for these molecular graphs. Furthermore, their  numerical computations and verification were also performed.
\section{Preliminaries}
\noindent In this section, we focused on some basic and new concepts related to the $k$-distance degree based topological indices.\\
\noindent Let $G$ be a simple molecular graph with the vertex set $V(G)$, the edge set $E(G)$. For every vertex $v\in V(G)$, the degree of $v$ is defined as $deg(v)=|\{u\in V(G):uv\in E(G)\}|$. The distance between any two vertices $u$, $v$ of $G$ is denoted by $d(u,v)$ and is defined as the length of the shortest path joining $u$ and $v$. The open $k$-neighborhood of a vertex $v$ is represented by $N_{k}(v/G)$ and is defined as $N_{k}(v/G)=\{u\in V(G):d(u,v)=k, \forall k\in \mathbb{Z}^+\}$. The $k$-degree of a vertex $v$ in $G$ is denoted by $deg_{k}(v)$ and is defined as the number of $k$-neighbors of the vertex $v$ in $G$, i.e., $deg_{k}(v)=|N_{k}(v/G)|$ \cite{1}. In this study we are computing $k$-distance degree based topological indices for $k=2$.\\

\noindent A new version of first leap Zagreb index was defined by V.R. Kulli \cite{10}.
\begin{eqnarray}
  LM_{1}(G) &=& \sum_{uv\in E(G)}[deg_{2}(u)+deg_{2}(v)]
\end{eqnarray}
\noindent The second leap Zagreb index was introduced by Naji et al. \cite{2}.
\begin{eqnarray}
  LM_{2}(G) &=& \sum_{uv\in E(G)}deg_{2}(u)deg_{2}(v)
\end{eqnarray}
\noindent Further, V. R. Kulli \cite{10} proposed two more topological indices named as first and second hyper leap-Zagreb indices and are defined as
\begin{eqnarray}
  HLM_{1}(G) &=& \sum_{uv\in E(G)}[deg_{2}(u)+deg_{2}(v)]^{2}\\
  HLM_{2}(G) &=& \sum_{uv\in E(G)}[deg_{2}(u)deg_{2}(v)]^{2}
\end{eqnarray}
\noindent Furthermore, by considering leap zagreb indices, V. R. Kulli \cite{10} introduced the first and second leap Zagreb polynomials as well as the first and second leap hyper-Zagreb polynomials for a graph $G$, respectively.
\begin{eqnarray}
  LM_{1}(G:x) &=& \sum_{uv\in E(G)}x^{[deg_{2}(u)+deg_{2}(v)]}\\
  LM_{2}(G:x) &=& \sum_{uv\in E(G)}x^{[deg_{2}(u)deg_{2}(v)]}\\
  HLM_{1}(G:x) &=& \sum_{uv\in E(G)}x^{[deg_{2}(u)+deg_{2}(v)]^{2}}\\
  HLM_{2}(G:x) &=& \sum_{uv\in E(G)}x^{[deg_{2}(u)deg_{2}(v)]^{2}}
\end{eqnarray}
\noindent Now, we define new $k$-distance degree topological indices that are leap Somber index, hyper leap forgotten index, leap $Y$ index, and leap $Y$ coindex as:
\begin{eqnarray}
  LSO(G) &=& \sum_{uv\in E(G)}\sqrt{deg_{k}(u)+deg_{k}(v)}\\
HLF(G) &=& \sum_{uv\in E(G)}[deg_{k}^{2}(u)+deg_{k}^{2}(v)]^{2}\\
  LY(G) &=& \sum_{uv\in E(G)}[deg_{k}^{3}(u)+deg_{k}^{3}(v)]\\
 \overline{LY}(G) &=& (n-1)(LF(G))-(LY(G))
\end{eqnarray}
where,
\begin{eqnarray}
LF(G) &=& \sum_{uv\in E(G)}[deg_{k}^{2}(u)+deg_{k}^{2}(v)]
\end{eqnarray}

\noindent Throughout the work, we calculate distance degree based topological indices, their polynomials, and newly defined k-distance degree based topological indices for the molecular structures of zigzag benzenoid system $Z_{p}$ and rhombic benzenoid system $R_{p}$. Furthermore, their numerical computations and verifications were performed. 	
\section{Main Results}
\noindent In this section, we provide some results related to pre-defined distance degree based topological indices such as first and second leap Zagreb indices, first and second leap hyper Zagreb indices as well as their polynomials for the molecular structures of $Z_{p}$ and $R_{p}$. Also, we compute some results related to newly defined $k$-distance degree based topological indices for $Z_p$ and $R_p$. We consider $k=2$ and number of edges as a frequency of edges throughout the study.
\subsection{Zigzag Benzenoid System}
In this subsection, the detail structure and the edge partition technique for the molecular structure zigzag benzenoid system $Z_p$ and their related results are discussed.\\

\noindent Figure 2 shows the graph of a zigzag benzenoid system $Z_{p}$. The number of rows in $Z_{p}$ is $p$, and each row of the system has two hexagons with $8p+2$ vertices and $10p + 1$ edges.\\
\begin{figure}[h!]
	\centering
	\includegraphics[width=4in]{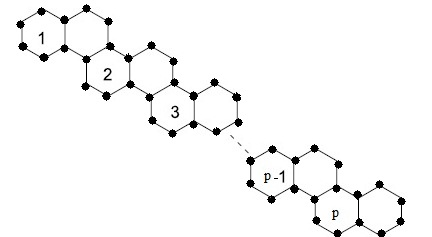}
	\caption{Zigzag benzenoid system $Z_{p}$}
\end{figure}

\noindent We divide the edge set by the distance degree of endpoints of the graph's edges. Then, we compute the number of edges (frequencies) in each set of the partition. All of the vertices have distance two from other vertices is considered to be the degree of endpoints vertices. The edges whose one endpoint vertex have two vertices of distance two and other endpoint vertex also have two vertices of distance two are only found on one upside and one downside of the chain. That means, we have only two such edges in the entire chain. Next, we analyse the number of edges for endpoints $deg_{2}(u)=2,deg_{2}(v)=3$, $deg_{2}(u)=3,deg_{2}(v)=4$ are four having two edges upside and rest are downside of the chain. Similarly, we found the number of edges for the endpoints $deg_{2}(u)=3,deg_{2}(v)=3$, $deg_{2}(u)=3,deg_{2}(v)=5$, $deg_{2}(u)=4,deg_{2}(v)=5$, $deg_{2}(u)=5,deg_{2}(v)=5$ as $2(p-1)$, $4(p-1)$, $p$, $3(p-1)$, respectively. Thus, the edge partition of $Z_p$ is summarized in Table 1.
\begin{table}[h!]
	\centering
\caption{Edge partition of $Z_{p}$} \label{table2}
\setlength{\tabcolsep}{30pt}
\renewcommand{\arraystretch}{1.0}
\begin{tabular}{l ccc}
\hline
$deg_{2}(u)$ & $deg_{2}(v)$ & frequency\\
\hline
2 & 2 & 2\\
2 & 3 & 4\\
3 & 3 & 2(p-1)\\
3 & 4 & 4\\
3 & 5 & 4(p-1)\\
4 & 5 & p\\
5 & 5 & 3(p-1)\\
\hline
\end{tabular}
\end{table}

\noindent First the result is obtained for pre defined first and second leap Zagreb indices, first and second leap hyper Zagreb indices for the molecular structure $Z_p$, $p\geq 2$.
\begin{thm}
If $G$ be a molecular graph of a zigzag benzenoid system $Z_{p}$, $p\geq2$. Then

  \item (i) $LM_{1}(G)=83p-18$
  \item (ii) $LM_{2}(G)=173p-73$
  \item(iii) $HLM_{1}(G)=709p-350$
  \item (iv) $HLM_{2}(G)=3337p-2257$

\end{thm}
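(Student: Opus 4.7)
The plan is to apply the edge-partition method directly. Each of the four indices is a sum $\sum_{uv\in E(G)} f\bigl(deg_2(u),deg_2(v)\bigr)$ for a symmetric function $f$, so once the edges of $Z_p$ are grouped by their unordered endpoint pair $(deg_2(u),deg_2(v))$ into the seven classes listed in Table~1, the sum collapses to $\sum_{(i,j)} n_{i,j}\,f(i,j)$, where $n_{i,j}$ denotes the corresponding frequency. For the four indices in the theorem, $f$ is $x+y$, $xy$, $(x+y)^2$, and $(xy)^2$, respectively.

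For each index I would evaluate $f$ on the seven pairs $(2,2),(2,3),(3,3),(3,4),(3,5),(4,5),(5,5)$ and multiply by the frequency read from Table~1. Of these seven frequencies, three are constants ($2$, $4$, $4$), one is exactly $p$, and the remaining three have the form $c(p-1)$ with $c\in\{2,4,3\}$. Consequently each sum is automatically linear in $p$, of the form $ap+b$, and the claimed identity is recovered by collecting the coefficient of $p$ and the additive constant separately. The four cases differ only in how large the weights $f(i,j)$ are — reaching $100$ and $625$ for the largest classes in $HLM_1$ and $HLM_2$ — so care with the arithmetic (and with not losing $p-1$ factors) is the only delicate point in this step.

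The conceptual content of the proof therefore lies entirely in Table~1, i.e., in the justification that the edge set of $Z_p$ decomposes into exactly those seven classes with exactly those frequencies, and this is the step I expect to be the real obstacle to getting right. That partition has been justified in the paragraph preceding the theorem by structural inspection of $Z_p$: two extremal edges at the top/bottom of the chain, four transition edges appearing near each end, and four families of interior edges whose sizes scale as $2(p-1)$, $4(p-1)$, $p$, and $3(p-1)$. Once this partition is granted, the four identities reduce to a routine bookkeeping exercise, with the only care being to separate the $p$-dependent and constant contributions cleanly so as to read off the coefficients $83,173,709,3337$ and the constants $-18,-73,-350,-2257$ claimed in the theorem.
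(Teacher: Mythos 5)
Your approach is exactly the paper's: read the seven classes and frequencies from Table~1, evaluate $f(i,j)$ on each class, and collect the $p$-linear expression. For (i) and (ii) this works and reproduces $83p-18$ and $173p-73$. But you have deferred the ``routine bookkeeping'' rather than performed it, and for (iii) and (iv) the bookkeeping does not actually land on the stated constants. Using Table~1 as given, the $(2,3)$ class has frequency $4$, so it contributes $4\cdot(2+3)^2=100$ to $HLM_1$ and $4\cdot(2\cdot3)^2=144$ to $HLM_2$; summing all seven classes then gives
\[
HLM_1(G)=328+81p+628(p-1)=709p-300,\qquad
HLM_2(G)=752+400p+2937(p-1)=3337p-2185,
\]
not $709p-350$ and $3337p-2257$. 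The paper's own proof of (iii) and (iv) silently replaces $4(25)$ by $2(25)$ and $4(36)$ by $2(36)$ for that class, which is where the extra $-50$ and $-72$ come from; note that the frequency $4$ cannot be ``corrected'' to $2$, since the frequencies must total $|E(Z_p)|=10p+1$, which forces the value $4$ (and parts (i) and (ii) already use $4$). So the gap in your proposal is concrete: the step you describe as recovering the constants $-350$ and $-2257$ in fact recovers $-300$ and $-2185$, i.e.\ a faithful execution of your (and the paper's) method refutes parts (iii) and (iv) as stated rather than proving them. You should either carry out the sums explicitly and correct the statement, or identify which of Table~1 and the theorem is to be trusted before claiming the identities follow.
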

\begin{proof}
Let $G$ be the molecular graph of zigzag benzenoid system $Z_p$, $p\geq2$. Then $|V(G)|=8p+2$ and $|E(G)|=10p+1$. Now, by using Table $1$ and equations $1-4$, we compute\\
\item \textbf{(i) First leap Zagreb Index}
	\begin{eqnarray*}
LM_1(G)&=&\sum\limits_{uv\in E(G)}[deg_2(u)+deg_2(v)]\\
         &=&2(2+2)+4(2+3)+4(3+4)+2(p-1)(3+3)+4(p-1)(3+5)+p(4+5)\\
         & &+3(p-1)(5+5)\\
         &=&2(4)+4(5)+4(7)+2(p-1)(6)+4(p-1)(8)+p(9)+3(p-1)(10)\\
         &=&8+20+28+(2p-2)(6)+(4p-4)(8)+p(9)+(3p-3)(10)\\
         &=&56+12p-12+32p-32+9p+30p-30\\
         &=&83p-18
\end{eqnarray*}
\item \textbf{(ii) Second leap Zagreb Index}
\begin{eqnarray*}
LM_2(G)&=&\sum\limits_{uv\in E(G)}[deg_2(u)deg_2(v)]\\
&=&2(4)+4(6)+4(12)+2(p-1)9+4(p-1)15+p20+3(p-1)25\\
&=&8+24+48+18(p-1)+60(p-1)+20p+75(p-1)\\
&=&80+18p-18+60p-60+20p+75p-75\\
&=&173p-73
\end{eqnarray*}
\item \textbf{(iii) First leap  hyper-Zagreb Index}
\begin{eqnarray*}
HLM_1(G) &=&\sum\limits_{uv\in E(G)}[deg_2(u)+deg_2(v)]^2\\
&=& 2(4)^2+4(5)^2+4(7)^2+2(p-1)(6)^2+4(p-1)(8)^2+p(9)^2+\\
& & 3(p-1)(10)^2\\
&=& 2(16)+2(25)+4(49)+2(p-1)(36)+4(p-1)(64)+81p+\\
& & 3(p-1)(100)\\
&=& 32+50+196+72(p-1)+256(p-1)+81p+300(p-1)\\
&=& 709p-350
\end{eqnarray*}
\item \textbf{(iv) Second leap hyper-Zagreb Index}
\begin{eqnarray*}
HLM_2(G) &=&\sum\limits_{uv\in E(G)}[deg_2(u)deg_2(v)]^2\\
&=& 2(4)^{2}+4(6)^{2}+4(12)^{2}+2(p-1)(9)^{2}+4(p-1)(15)^{2}+p(20)^{2}+\\
& & 3(p-1)(25)^{2}\\
&=& 2(16)+2(36)+4(144)+2(p-1)(81)+4(p-1)(225)+400p+\\
& & 3(p-1)(625)\\
&=& 32+72+576+162(p-1)+900(p-1)+400p+1875(p-1)\\
&=& 3337p-2257
\end{eqnarray*}
 \end{proof}

\noindent Next, on the basis of above mentioned result we obtain the first and second leap Zagreb polynomials as well as first and second leap hyper-Zagreb polynomials for $Z_p$, $p\geq 2$.
 \begin{thm}
Let $G$ be a molecular graph of zigzag benzenoid system $Z_p$, $p\geq2$. Then\\
\item(i) $LM_1(G:x)=2x^4+4x^5+2(p-1)x^6+4x^7+4(p-1)x^8+px^9+3(p-1)x^{10}$
\item(ii) $LM_2(G:x)=2x^4+4x^6+2(p-1)x^9+4x^{12}+4(p-1)x^{15}+px^{20}+3(p-1)x^{25}$
\item (iii) $HLM_1(G:x)=2x^{16}+4x^{25}+4x^{49}+4(p-1)x^{64}+px^{81}+3(p-1)x^{100}+2(p-1)x^{36}$
\item(iv) $HLM_2(G:x)=2x^{16}+4x^{36}+2(p-1)x^{81}+4x^{144}+4(p-1)x^{225}+px^{400}+3(p-1)x^{625}$

\end{thm}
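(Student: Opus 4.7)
The plan is to apply the definitions in equations (5)--(8) directly to the edge partition of $Z_p$ recorded in Table 1, in exactly the same spirit as the computation of the numerical indices in Theorem 1. The essential observation is that each of the four polynomials is a sum over edges of a single monomial whose exponent depends only on the pair $(deg_2(u),deg_2(v))$. Consequently, edges with a common unordered degree pair contribute equal monomials, and the coefficient attached to each distinct exponent is just the frequency of the corresponding class from Table 1.

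With that in mind, I would handle the four parts in parallel. The seven edge classes of $Z_p$ have unordered $2$-degree pairs $(2,2),(2,3),(3,3),(3,4),(3,5),(4,5),(5,5)$ with frequencies $2,4,2(p-1),4,4(p-1),p,3(p-1)$ respectively. For part (i) I evaluate $deg_2(u)+deg_2(v)$ on each class to get exponents $4,5,6,7,8,9,10$; the frequencies then copy over verbatim as coefficients, which yields the stated expression for $LM_1(G:x)$. For part (ii) the exponent function $deg_2(u)\,deg_2(v)$ gives $4,6,9,12,15,20,25$, producing $LM_2(G:x)$. Parts (iii) and (iv) are handled identically, squaring the sum or product to obtain exponents $16,25,36,49,64,81,100$ and $16,36,81,144,225,400,625$, respectively.

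Because the structural work (establishing the edge partition of $Z_p$) was already carried out in the discussion preceding Theorem 1, no new combinatorial argument is required. The only checks are bookkeeping: ensuring that each of the seven edge classes contributes exactly one term, that each exponent is computed correctly from the pair, and that the order in which the terms are listed in the statement is matched. The main (very minor) obstacle is purely clerical, namely verifying arithmetic on the squared quantities in (iii) and (iv) and confirming that the listed ordering in the theorem (in which, for instance, the $(3,3)$ class appears last in part (iii)) is consistent with the contribution produced by Table 1. Once this matching is done term by term, the four polynomial identities follow immediately from the definitions (5)--(8).
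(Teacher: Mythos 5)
Your proposal is correct and follows exactly the paper's own proof: the paper likewise reads off the seven edge classes from Table 1 and substitutes the sum, product, squared sum, and squared product of the $2$-degrees as exponents, with the class frequencies as coefficients. All of your computed exponents and coefficients match the stated polynomials, so nothing further is needed.
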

\begin{proof}
From Table $1$ and equations $5-8$, we obtain the following polynomials for the molecular structure $Z_{p}$.\\
\item \textbf{(i) First leap Zagreb Polynomial}
	\begin{eqnarray*}
	LM_1(G:x)&=&\sum\limits_{uv\in E(G)}x^{[deg_2(u)+deg_2(v)]}\\
	           &=&2x^4+4x^5+2(p-1)x^6+4x^7+4(p-1)x^8+px^9+3(p-1)x^{10}
\end{eqnarray*}
\item \textbf{(ii) Second leap Zagreb Polynomial}
	\begin{eqnarray*}
LM_2(G:x)&=&\sum\limits_{uv\in E(G)}x^{[deg_2(u)deg_2(v)]}\\
	     &=&2x^4+4x^6+2(p-1)x^9+4x^{12}+4(p-1)x^{15}+px^{20}+\\
         & &3(p-1)x^{25}
\end{eqnarray*}
\item \textbf{(iii) First leap hyper-Zagreb Polynomial}
	\begin{eqnarray*}
HLM_1(G:x)&=&\sum\limits_{uv\in E(G)}x^{[deg_2(u)+deg_2(v)]^2}\\
	      &=&2x^{4^2}+4x^{5^2}+4x^{7^2}+4(p-1)x^{8^2}+px^{9^2}+3(p-1)x^{10^2}+\\
& & 2(p-1)x^{6^2}\\
	      &=&2x^{16}+4x^{25}+2(p-1)x^{36}+4x^{49}+4(p-1)x^{64}+px^{81}+\\
& & 3(p-1)x^{100}
\end{eqnarray*}
\item \textbf{(iv) Second leap hyper-Zagreb Polynomial}
		\begin{eqnarray*}
HLM_2(G:x)&=&\sum\limits_{uv\in E(G)}x^{[deg_2(u)deg_2(v)]^2}\\
	      &=&2x^{16}+4x^{36}+2(p-1)x^{81}+4x^{144}+4(p-1)x^{225}+px^{400}+\\
          & & 3(p-1)x^{625}
\end{eqnarray*}
\end{proof}
\noindent Now, the newly defined $k$-distance degree based topological indices such as the leap somber index, hyper leap forgotten index, leap $Y$ index, and leap $Y$ coindex are obtained for the molecular structure $Z_p$, $p\geq 2$. 
\begin{thm}
Let $G$ be a molecular graph of Zigzag benzenoid system $Z_p$, $p\geq2$. Then

\item(i) $LSO(G)=4(1+\sqrt 5+\sqrt{7})+(p-1)(8\sqrt{2}+3\sqrt 10+2\sqrt{6})+3p$
\item(ii) $HLF(G)=14453p-9468$
\item(iii) $LY(G)=1655p-930$
\item (iv)$\overline{LY}(G)=-1292p^2+2068p-776.$

\end{thm}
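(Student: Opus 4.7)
The plan is to compute each of the four indices directly from its defining formula, using the edge partition tabulated in Table~1 and the edge-count identity $|E(G)|=10p+1$. For every edge class $(deg_{2}(u),deg_{2}(v))$ I would form the product of its frequency with the appropriate function $f(deg_{2}(u),deg_{2}(v))$, then sum the seven resulting contributions and collect like terms in $p$.

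For (i), apply (9) with $f(a,b)=\sqrt{a+b}$. The seven summands are $2\sqrt{4}$, $4\sqrt{5}$, $2(p-1)\sqrt{6}$, $4\sqrt{7}$, $4(p-1)\sqrt{8}$, $p\sqrt{9}$, and $3(p-1)\sqrt{10}$. Three radicands are perfect squares: $\sqrt{4}=2$ and $\sqrt{9}=3$ contribute the rational block $4+3p$, while $\sqrt{8}=2\sqrt{2}$ rewrites $4(p-1)\sqrt{8}$ as $8(p-1)\sqrt{2}$. Grouping the remaining irrational constants ($4\sqrt{5}+4\sqrt{7}$) and the irrational coefficients of $(p-1)$ ($8\sqrt{2}+2\sqrt{6}+3\sqrt{10}$) produces the stated closed form.

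For (ii) $HLF(G)$ and (iii) $LY(G)$ the same routine applies with $f(a,b)=(a^{2}+b^{2})^{2}$ and $f(a,b)=a^{3}+b^{3}$ respectively. Each of the seven edge classes produces a numerical coefficient times its frequency, and separating the purely constant contributions from those carrying $p$ or $(p-1)$ assembles each sum into a linear polynomial in $p$ of the claimed form.

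For (iv), I first need $LF(G)$ from the definition (13), which is another straightforward edge-partition computation yielding a linear expression in $p$. Using $|V(G)|=8p+2$ and hence $n-1=8p+1$, the definition (12) then gives $\overline{LY}(G)=(8p+1)\,LF(G)-LY(G)$; expanding $(8p+1)\,LF(G)$ is what generates the quadratic term visible in the theorem, and subtracting the linear $LY(G)$ from (iii) finishes the calculation. The whole argument is mechanical bookkeeping, and the only real hazard is arithmetic accuracy, most acutely in (ii) where entries such as $(5^{2}+5^{2})^{2}=2500$ get multiplied by frequencies of the form $3(p-1)$, and in (iv) where the cross-product $8p\cdot LF(G)$ together with its linear tails must be combined without sign error. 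As a safeguard I would verify each final answer at $p=2$ against a direct enumeration of Table~1 before declaring the general formula.
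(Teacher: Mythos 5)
Parts (i)--(iii) of your plan are exactly what the paper does: read off the seven edge classes from Table~1, multiply each frequency by $\sqrt{a+b}$, $(a^{2}+b^{2})^{2}$, or $a^{3}+b^{3}$ respectively, and collect terms. Your groupings (the rational block $4+3p$ from $\sqrt4$ and $\sqrt9$, the $(p-1)$ coefficient $8\sqrt2+2\sqrt6+3\sqrt{10}$, the constants $3304$ and $536$ with slopes $12772+1681$ and $1466+189$) reproduce the paper's computations line for line, and the arithmetic checks out.

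Part (iv) is where your route breaks down, and not because of an arithmetic slip on your side. You propose to apply definition (12) literally with $n-1=8p+1$, i.e.\ to compute $(8p+1)\,LF(G)-LY(G)$ with $LF(G)=363p-154$ and $LY(G)=1655p-930$. That gives $2904p^{2}-2524p+776$, which is not the claimed $-1292p^{2}+2068p-776$; in particular your own safeguard at $p=2$ yields $17\cdot572-2380=7344$ against the tabulated value $-1808$. The paper's proof does something different from its own definition: it writes $\overline{LY}(G)=(p-1)\,LF(G)-LY(G)$, replacing $n-1$ by $p-1$, and then expands this as $p\,LF-p\,LY-LF+LY$, i.e.\ it actually evaluates $(p-1)\bigl(LF(G)-LY(G)\bigr)=(p-1)(-1292p-776+154+930\cdots)$, which is what produces $-1292p^{2}+2068p-776$ (and matches Table~4). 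So the stated formula in (iv) is not a consequence of definition (12) as written, and your plan, which is the faithful reading of that definition, cannot arrive at it. To review or reproduce (iv) you must either adopt the paper's implicit convention (compute $(p-1)(LF-LY)$) or flag the discrepancy between the definition, the proof, and the claimed closed form; as it stands your step ``expanding $(8p+1)LF(G)$ \ldots finishes the calculation'' would instead refute the theorem as stated.
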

\begin{proof}
Utilising edge partition from Table $1$, we calculate the new $k$-distance degree based topological indices defined in equations $9-13$ as:

\item \textbf{(i) Leap Somber Index}
\begin{eqnarray*}
LSO(G) &=& \sum_{uv\in E(G)}\sqrt{deg_{2}(u)+deg_{2}(v)}\\ &=&2(4)^{\frac{1}{2}}+4(5)^{\frac{1}{2}}+4(7)^{\frac{1}{2}}+4(p-1)(8)^{\frac{1}{2}}+p(9)^{\frac{1}{2}}+3(p-1)(10)^{\frac{1}{2}}+\\
    & & 2(p-1)(6)^{\frac{1}{2}}\\
  	&=&4 +4\sqrt{5}+4\sqrt{7}+8(p-1)\sqrt{2}+3p+3(p-1)\sqrt{10}+2(p-1)\sqrt{6}\\
  	&=& 4(1+\sqrt 5+\sqrt{7})+(p-1)(8\sqrt{2}+3\sqrt 10+2\sqrt{6})+3p
\end{eqnarray*}
\item \textbf{(iii) Hyper leap forgotten Index}
\begin{eqnarray*}
HLF(G)&=&\sum\limits_{uv\in E(G)}[deg^2_2(u)_+deg^2_2(v)]^2\\
	&=&2(4+4)^2+4(4+9)^2+4(9+16)^2+4(p-1)(9+25)^2+p(16+25)^2\\
    & & +3(p-1)(25+25)^2+2(p-1)(9+9)^2\\
	&=&2(64)+4(169)+4(625)+4(p-1)(1156)+1681p+3(p-1)(2500)+\\
    & & 2(p-1)(324)\\
	&=&128+676+2500+4624(p-1)+1681p+7500(p-1)+648(p-1)\\
	&=&3304+(p-1)(4624+7500+648)+1681p\\
	&=&14453p-9468
\end{eqnarray*}
\item \textbf{(iv) Leap Y Index}
\begin{eqnarray*}
LY(G)&=&\sum\limits_{uv\in E(G)}[deg^3_2(u)+deg^3_2(v)]\\
	&=&2(8+8)+4(8+27)+4(27+64)+4(p-1)(27+125)+p(64+125)\\
& & +3(p-1)(125+125)+2(p-1)(27+27)\\
&=&32+140+364+608(p-1)+189p+750(p-1)+108(p-1)\\
&=&536+1466(p-1)+189p\\
&=&1655p-930
\end{eqnarray*}
\item \textbf{(v) Leap Y co-index}
\begin{eqnarray*}
\overline{LY}(G)&=&(p-1)(LF(G))-(LY(G))\\
	&=&pLF(G)-pLY(G)-LF(G)+LY(G)\\
	&=&p(363p-154)-p(1655p-930)-(363p-154)+(1655p-930)\\
	&=&363p^2-154p-1655p^2+930p-363p+154+1655p-930\\
	&=&-1292p^2+2068p-776.
	\end{eqnarray*}
where, $LF(G) = 363p-154$.
\end{proof}
\subsection{Rhombic Benzenoid System.}
\noindent In this subsection, we discuss the whole structure and the edge partition technique of the molecular structure of rhombic benzenoid system $R_p$. Furthermore, the results related to the distance degree, $k$-distance degree based topological indices and distance degree based polynomials are obtained.\\

\noindent Consider a benzenoid system in which hexagons are arranged to form a rhombic shape $R_{p}$, where $p$ denotes the number of hexagons along each rhombic boundary is shown in Figure 3. There are $2p(p + 2)$ vertices and $3p^2 + 4p – 1$ edges in this benzenoid system.\\
\begin{figure}[h!]
  \centering
  \includegraphics[width=3in]{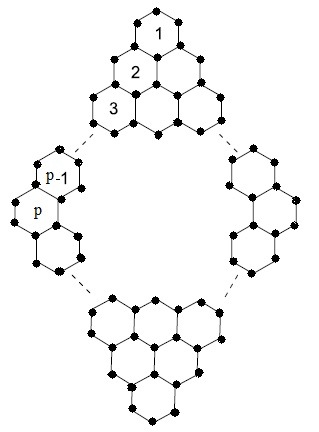}
  \caption{Rhombic Benzenoid System, $R_{p}$}\label{Figure3}
\end{figure}

\noindent We partitioned the edge set into three groups based on the distance degree of each edge's end vertices. The edges having $deg_{2}(u)=2,deg_{2}(v)=3$ appear on the extreme upside and downside corner of the rhombus, which are exactly $4$. On the outer boundary of the rhombus $deg_{2}(u)=3,deg_{2}(v)=3$ lies on the outer middle corner of the left and right side of the rhombus and $deg_{2}(u)=3,deg_{2}(v)=4$ lies on the upper and lower side of the outer middle corner as well as on the both side of the extreme upside and downside corner of the rhombus, which are counted as exactly $2$ and $8$, respectively. Whereas, the edges having $deg_{2}(u)=4,deg_{2}(v)=4$, $deg_{2}(u)=4,deg_{2}(v)=6$ and $deg_{2}(u)=6,deg_{2}(v)=6$ are appearing on the inner filling of the rhombus except $deg_{2}(u)=4,deg_{2}(v)=4$ which lies on the outer boundary of the rhombus and exist if $p>2$ and their frequencies are like $8(p-2)$, $4(p-1)$ and $(p-1)^{2}+2(p-1)(p-2)$. Table $2$ summarize the edge partition of rhombic benzenoid system $R_p$.
\begin{table}[h!]
	\centering
\caption{Edge partition of $R_{p}$} \label{table2}
\setlength{\tabcolsep}{30pt}
\renewcommand{\arraystretch}{1.0}
\begin{tabular}{l ccc}
\hline
$deg_{2}(u)$ & $deg_{2}(v)$ & frequency\\
\hline
2 & 3 & 4\\
3 & 3 & 2\\
3 & 4 & 8\\
4 & 4 & 8(p-2)\\
4 & 6 & 4(p-1)\\
6 & 6 & $(p-1)^{2}+2(p-1)(p-2)$\\
\hline
\end{tabular}
\end{table}

\noindent Next, the distance degree based topological indices such as first and second leap Zagreb index asnwell as the first and second leap hyper Zagreb index for the molecular structure $R_p$, $p\geq 2$ are obtained.
\begin{thm}
If $G$ be a molecular graph of rhombic benzenoid system $R_p$, $p\geq2$. Then

 \item (i) $LM_1(G)=36p^2+8p-20$
\item (ii) $LM_2(G)=108p^2-64p-34$
\item (iii) $HLM_1(G)=432p^2-240p-140$
\item (iv) $HLM_2(G)=3888p^2-6016p+1538$

\end{thm}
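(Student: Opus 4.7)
The plan is to prove all four parts by direct substitution into definitions (1)--(4), using the edge partition of $R_p$ given in Table 2. Since each index is a sum over $E(G)$ of a quantity depending only on $(deg_2(u), deg_2(v))$, and Table 2 partitions $E(G)$ into six classes by that pair, the computation reduces to a weighted sum of six explicit terms. The only conceptually nontrivial ingredient beyond arithmetic is the frequency $(p-1)^2 + 2(p-1)(p-2)$ for the $(6,6)$ class, which is quadratic in $p$ and is responsible for all the $p^2$ contributions in the final formulas.

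First I would simplify the $(6,6)$ frequency once and for all by factoring:
\begin{equation*}
(p-1)^2 + 2(p-1)(p-2) = (p-1)\bigl[(p-1)+2(p-2)\bigr] = (p-1)(3p-5) = 3p^2 - 8p + 5.
\end{equation*}
Having this closed form in hand, for part (i) I substitute into $LM_1(G) = \sum_{uv\in E(G)}[deg_2(u)+deg_2(v)]$, producing
\begin{equation*}
LM_1(G) = 4(5) + 2(6) + 8(7) + 8(p-2)(8) + 4(p-1)(10) + (3p^2-8p+5)(12),
\end{equation*}
which after collecting like powers of $p$ gives $36p^2 + 8p - 20$. Parts (ii), (iii), (iv) follow the identical template: I multiply the product $deg_2(u)\,deg_2(v)$ (for $LM_2$), the square $[deg_2(u)+deg_2(v)]^2$ (for $HLM_1$), or the square $[deg_2(u)\,deg_2(v)]^2$ (for $HLM_2$) by the corresponding frequency from Table 2 and sum, using the same factored form $(3p^2-8p+5)$ whenever the $(6,6)$ class appears.

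The only real bookkeeping hazard is part (iv), where the $(6,6)$ class contributes $36^2 = 1296$ times $(3p^2-8p+5)$, and the $(4,6)$ and $(4,4)$ classes contribute $576\cdot 4(p-1)$ and $256\cdot 8(p-2)$ respectively; keeping the constant, linear, and quadratic coefficients straight under these large multipliers is where an arithmetic slip is most likely. To guard against this, I would organize each computation as a table of (class, coefficient, frequency, contribution) before summing columns, and then verify the result in two ways: evaluate at $p=2$ directly from the edge list (where the $(4,4)$ class vanishes and the $(6,6)$ class has frequency $1$), and check that the closed form agrees. The same $p=2$ sanity check validates all four parts simultaneously, since each index is a linear combination of the six edge-class contributions with fixed integer coefficients.
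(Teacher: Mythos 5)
Your proposal is correct and follows essentially the same route as the paper: both substitute the Table 2 edge partition directly into definitions (1)--(4), expand the $(6,6)$ frequency as $3p^2-8p+5$, and collect coefficients; all four closed forms check out. The only addition is your $p=2$ sanity check, which is a sensible safeguard but not a different method.
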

\begin{proof}
Let $G$ be a molecular graph of rhombic benzenoid system $R_p$, $p\geq2$. Then $|V(G)|=2p(p+2)$ and $|E(G)|=3p^2+4p-1$. Using edge partition from Table $2$, we compute different indices for $R_p$ defined in equations $1-4$.
\item \textbf{(i) First leap Zagreb Index}
	\begin{eqnarray*}
LM_1(G)&=&\sum\limits_{uv\in E(G)}[deg_2(u)+deg_2(v)]\\
	&=&4(5)+2(6)+8(7)+8(p-2)(8)+4(p-1)(10)+((p-1)^2+\\
& &2(p-1)(p-2))(12)\\
	&=&20+12+56+64(p-2)+40(p-1)+12(3p^{2}-8p+5)\\
	&=&88-128-40+60+p(64+40-96)+36p^{2}\\
	&=&36p^{2}+8p-20
\end{eqnarray*}
\item \textbf{(ii) Second leap Zagreb Index}
	\begin{eqnarray*}
LM_2(G)&=&\sum\limits_{uv\in E(G)}[deg_2(u)deg_2(v)]\\
	&=&4(6)+2(9)+8(12)+8(p-2)(16)+4(p-1)(24)+((p-1)^2+\\
& &2(p-1)(p-2))36\\
	&=&24+18+96+128(p-2)+96(p-1)+36(3p^{2}-8p+5)\\
    &=&138-256-96+180+p(128+96-288)+108p^{2}\\
    &=&108p^{2}-64p-34
\end{eqnarray*}
\item \textbf{(iii) First leap hyper-Zagreb Index}
\begin{eqnarray*}
HLM_1(G)&=&\sum\limits_{uv\in E(G)}[deg_2(u)+deg_2(v)]^2\\
	   &=&4(5)^{2}+2(6)^{2}+8(7)^{2}+8(p-2)(8)^{2}+4(p-1)(10)^{2}+((p-1)^2\\
       & &+2(p-1)(p-2))(12)^{2}\\
    &=&4(25)+2(36)+8(49)+8(p-2)(64)+4(p-1)(100)+((p-1)^2\\
    & &+2(p-1)(p-2))(144)\\
	&=&100+72+392+512(p-2)+400(p-1)+144(3p^{2}-8p+5)\\
	&=&564-1024-400+720+p(512+400-1152)+432p^{2}\\
	&=&432p^{2}-240p-140
\end{eqnarray*}
\item \textbf{(iv) Second leap hyper-Zagreb Index}
\begin{eqnarray*}
HLM_2(G) &=&\sum\limits_{uv\in E(G)}[deg_2(u)deg_2(v)]^2\\
&=&4(6)^{2}+2(9)^{2}+8(12)^{2}+8(p-2)(16)^{2}+4(p-1)(24)^{2}+((p-1)^2\\
& &+2(p-1)(p-2))36^{2}\\
	&=&144+162+1152+2048(p-2)+2304(p-1)+1296(3p^{2}-8p+5)\\
    &=&1458-4096-2304+6480+p(2048+2304-10368)+3888p^{2}\\
    &=&3888p^{2}-6016p+1538
\end{eqnarray*}	
\end{proof}
\noindent Further, the first and second leap Zagreb as well as the first and second leap hyper Zagreb polynomials for $R_p$, $p\geq 2$ are obtained.
\begin{thm}
Let $G$ be a molecular graph of rhombic benzenoid system $R_p$, $p\geq2$. Then

 \item(i) $LM_1(G:x)=4x^5+2x^6+8x^7+8(p-2)x^8+4(p-1)x^{10}+(3p^2-8p+5)x^{12}$
\item(ii) $LM_2(G:x)=4x^6+2x^9+8x^{12}+8(p-2)x^{16}+4(p-1)x^{24}+(3p^2-8p+5)x^{36}$
\item(iii) $HLM_1(G:x)=4x^{25}+2x^{36}+8x^{49}+8(p-2)x^{64}+4(p-1)x^{100}+(3p^2-8p+5)x^{144}$
\item(iv) $HLM_1(G:x)=4x^{36}+2x^{81}+8x^{144}+8(p-2)x^{256}+4(p-1)x^{576}+(3p^2-8p+5)x^{1296}$
\end{thm}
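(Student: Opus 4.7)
The plan is to compute each of the four polynomials directly from the edge partition recorded in Table~2, substituting the six cell frequencies into the generating-function definitions (5)--(8). Because these polynomials are nothing but generating functions indexed by one of the four maps $F(a,b)\in\{a+b,\,ab,\,(a+b)^2,\,(ab)^2\}$ of the $2$-degrees of the endpoints, the statement reduces to bookkeeping once a correct edge partition is in hand.

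First I would record a one-line simplification that will be reused in every part: the frequency of the $(6,6)$ class is
\[
(p-1)^2 + 2(p-1)(p-2) \;=\; (p-1)\bigl[(p-1)+2(p-2)\bigr] \;=\; (p-1)(3p-5) \;=\; 3p^2-8p+5,
\]
which is exactly the coefficient that appears in the highest-exponent monomial of each of (i)--(iv). With this in hand, part~(i) follows by reading off $a+b \in \{5,6,7,8,10,12\}$ across the six rows of Table~2; part~(ii) by reading $ab \in \{6,9,12,16,24,36\}$; part~(iii) by squaring the sums to get $(a+b)^2 \in \{25,36,49,64,100,144\}$; and part~(iv) by squaring the products to get $(ab)^2 \in \{36,81,144,256,576,1296\}$. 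No two of the six classes collide at the same exponent in any of the four cases, so each polynomial has exactly six terms and the substitution is unambiguous.

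The only genuinely non-routine point is the verification that Table~2 is exhaustive and correctly counted---particularly the class $E_{4,4}$ of count $8(p-2)$, which is vacuous when $p=2$, and the interior class $E_{6,6}$ whose count requires a careful sweep of the inner rhombic region. Since the statement allows the edge partition to be taken as already established, no further obstacle remains: the four polynomials follow by direct term-by-term substitution. As a sanity check I would evaluate each polynomial at $x=1$ to recover $|E(G)|=3p^2+4p-1$, and confirm that the derivative at $x=1$ of (i), (ii), (iii), (iv) reproduces $LM_1(G)$, $LM_2(G)$, $HLM_1(G)$, $HLM_2(G)$ as computed in the preceding theorem.
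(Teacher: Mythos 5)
Your proposal is correct and follows essentially the same route as the paper: both substitute the six rows of Table~2 into the definitions (5)--(8) term by term, using the simplification $(p-1)^2+2(p-1)(p-2)=3p^2-8p+5$ for the $(6,6)$ class. Your added sanity checks (evaluating at $x=1$ to recover $|E(G)|=3p^2+4p-1$ and differentiating at $x=1$ to recover the indices from the preceding theorem) are a worthwhile supplement the paper does not perform, but they do not change the argument.
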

\begin{proof}
Now, we continuing to obtain the polynomials for $R_{p}$ mentioned in equations $5-8$ and are obtained using Table $2$.
\item \textbf{(i) First leap Zagreb Polynomial}
		\begin{eqnarray*}
		LM_1(G:x)&=&\sum\limits_{uv\in E(G)}x^{[deg_2(u)+deg_2(v)]}\\
		&=&4x^5+2x^6+8x^7+8(p-2)x^8+4(p-1)x^{10}+(3p^2-8p+5)x^{12}
	\end{eqnarray*}
\item \textbf{(ii) Second leap Zagreb Polynomial}	
			\begin{eqnarray*}
			LM_2(G:x)&=&\sum\limits_{uv\in E(G)}x^{[deg_2(u)deg_2(v)]}\\
			&=&4x^6+2x^9+8x^{12}+8(p-2)x^{16}+4(p-1)x^{24}+(3p^2-8p+5)x^{36}
		\end{eqnarray*}
\item \textbf{(iii) First leap hyper-Zagreb Polynomial}	
		\begin{eqnarray*}
			HL_1M(G,x)&=& \sum\limits_{uv\in E(G)}x^{[deg_2(u)deg_2(v)]^2}\\
			 &=& 4x^{5^2}+2x^{6^2}+8x^{7^2}+8(p-2)x^{8^2}+4(p-1)x^{10^2}+(3p^2-8p+5)x^{12^2}\\
		     &=&4x^{25}+2x^{36}+8x^{49}+8(p-2)x^{64}+4(p-1)x^{100}+(3p^2-8p+5)x^{144}
		\end{eqnarray*}
\item \textbf{(iv) Second leap hyper-Zagreb Polynomial}
	\begin{eqnarray*}
	HL_2M(G,x)&=&\sum\limits_{uv\in E(G)}x^{[deg_2(u)deg_2(v)]^2}\\
	&=&4x^{6^2}+2x^{9^2}+8x^{12^2}+8(p-2)x^{16^2}+4(p-1)x^{24^2}+(3p^2-8p+5)x^{36^2}\\
	&=&4x^{36}+2x^{81}+8x^{144}+8(p-2)x^{256}+4(p-1)x^{576}+(3p^2-8p+5)x^{1296}
\end{eqnarray*}
\end{proof}
\noindent Next, the newly defined $k$-distance degree based topological indices for the molecular structure rhombic benzenoid system $R_p$, $p\geq 2$ are obtained.
\begin{thm}
Let $G$ be a molecular graph of rhombic benzenoid system $R_p$, $p\geq2$. Then

\item (i) $LSO(G)=6p^2\sqrt{3}+p(16\sqrt{2}+4\sqrt{10}-16\sqrt{3})+4(\sqrt{5}-\sqrt{10})+2(\sqrt{6}+4\sqrt{7}+5\sqrt{3}-16\sqrt{2})$
\item (ii)$HLF(G)=15552p^2-22464p+5044$
\item (iii) $LY(G)=1296p^2-1312p-32$
\item (iv) $\overline{LY}(G)=-1080p^3+2280p^2-1160p-40$.

\end{thm}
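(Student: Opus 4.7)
The plan is to mimic the structure of Theorem 3 (the zigzag case), using the edge partition of $R_p$ in Table 2 together with the newly defined indices from equations (9)--(13). The six edge classes, parametrized by $(deg_2(u),deg_2(v))$, are $(2,3),\,(3,3),\,(3,4),\,(4,4),\,(4,6),\,(6,6)$, with the frequencies recorded in Table 2. Before starting, I would simplify the $(6,6)$-class frequency as $(p-1)^2+2(p-1)(p-2)=3p^2-8p+5$, so that each subsequent sum is a clean polynomial in $p$.

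For part (i), substituting the six edge-class data into (9) yields
\[
LSO(R_p)=4\sqrt{5}+2\sqrt{6}+8\sqrt{7}+8(p-2)\sqrt{8}+4(p-1)\sqrt{10}+(3p^2-8p+5)\sqrt{12}.
\]
I would then apply $\sqrt{8}=2\sqrt{2}$ and $\sqrt{12}=2\sqrt{3}$ and regroup by surd, writing the answer as a polynomial in $p$ whose coefficients are rational combinations of $\sqrt{2},\sqrt{3},\sqrt{5},\sqrt{6},\sqrt{7},\sqrt{10}$; matching the stated form is then pure re-bracketing. Parts (ii) and (iii) are the same mechanical exercise with the weights $(a^2+b^2)^2$ and $a^3+b^3$ in place of $\sqrt{a+b}$: write the six contributions, expand, and collect the constant, $p$, and $p^2$ pieces.

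For part (iv) the coindex requires one preparatory computation, namely the auxiliary leap forgotten sum $LF(R_p)=\sum_{uv\in E}[deg_2^{\,2}(u)+deg_2^{\,2}(v)]$ from equation (13), obtained by the same edge-class tally. Then I would invoke the reduction that the authors used in Theorem 3, treating (12) in the form
\[
\overline{LY}(G)=(p-1)\bigl(LF(G)-LY(G)\bigr),
\]
substitute the closed forms for $LF(R_p)$ and $LY(R_p)$ (the latter coming from part (iii)), and multiply out to reach a cubic in $p$, which should collapse to the claimed $-1080p^3+2280p^2-1160p-40$.

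The main obstacle is clerical rather than conceptual. Because the $(6,6)$-class carries a quadratic frequency, every index picks up a $p^2$ contribution and the coindex in (iv) a $p^3$ contribution, so one must keep a careful ledger of the six constant/polynomial pieces and, in part (i), avoid mixing up coefficients of unlike surds. Once Table 2 is taken as given, no new technique is needed beyond the arithmetic bookkeeping already demonstrated for $Z_p$.
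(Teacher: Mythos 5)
Your proposal follows the paper's proof essentially verbatim: the same edge partition from Table 2 (with the $(6,6)$-frequency simplified to $3p^2-8p+5$), the same term-by-term substitution into the defining sums for parts (i)--(iii), and the same auxiliary computation of $LF(R_p)=216p^2-112p-72$ feeding part (iv), where the paper likewise applies the reduction $\overline{LY}=(p-1)\bigl(LF-LY\bigr)$ rather than the literal $(n-1)LF-LY$ of equation (12). One caveat on part (iv): with $LF-LY=-1080p^2+1200p-40$, the product $(p-1)(LF-LY)$ expands to $-1080p^3+2280p^2-1240p+40$, not the stated $-1080p^3+2280p^2-1160p-40$; the paper reaches the latter only by taking the constant term of $LF-LY$ to be $+40$ instead of $-40$, so your expectation that the expansion ``should collapse to the claimed'' cubic will not be met unless you reproduce that sign slip.
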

\begin{proof}
By using edge partition from Table $2$, we calculate the new k-distance degree based topological indices for $R_{p}$ defined in equations $9-13$ as:
\item \textbf{(i) Leap Somber Index}
\begin{eqnarray*}
LSo(G)&=&\sum\limits_{uv\in E(G)}\sqrt{deg_2(u)+deg_2(v)}\\
&=& 4(5)^{\frac{1}{2}}+ 2(6)^{\frac{1}{2}}+ 8(7)^{\frac{1}{2}}+ 8(p-2)(8)^{\frac{1}{2}}+4(p-1)(10)^{\frac{1}{2}}+((p-1)^2+\\
& &2(p-1)(p-2))(12)^{\frac{1}{2}}\\
&=&4\sqrt{5}+2\sqrt{6}+8\sqrt{7}+16(p-2)\sqrt{2}+4(p-1)\sqrt{10}+(3p^2-8p+5)2\sqrt{3}\\
&=&4\sqrt{5}+2\sqrt{6}+8\sqrt{7}+16p\sqrt{2}-32\sqrt{2}+4p\sqrt{10}-4\sqrt{10}+6p^2\sqrt{3}-\\
& &16p\sqrt{3}+10\sqrt{3}\\
&=&6p^2\sqrt{3}+p(16\sqrt{2}+4\sqrt{10}-16\sqrt{3})+4(\sqrt{5}-\sqrt{10})+2(\sqrt{6}+4\sqrt{7}+\\
& &5\sqrt{3}-16\sqrt{2})
\end{eqnarray*}
\item \textbf{(iii) Hyper leap forgotten Index}
\begin{eqnarray*}
HLF(G)&=&\sum\limits_{uv\in E(G)}[deg^2_2(u)+deg^2_2(v)]^2\\
	&=&4(13)^2+2(18)^2+8(25)^2+8(p-2)(32)^2+4(p-1)(52)^2+(3p^2-8p+5)\\
& &(72)^2\\
	&=&676+648+5000+8192(p-2)+10816(p-1)+15552p^2-41472p\\
& &+25920\\
	&=&15552p^2-22464p+5044
\end{eqnarray*}
\item \textbf{(iv) Leap Y Index}
\begin{eqnarray*}
LY(G)&=&\sum\limits_{uv\in E(G)}[deg^3_2(u)+deg^3_2(v)]\\
	&=&4(8+27)+2(27+27)+8(27+64)+8(p-2)(64+64)+4(p-1)\\
& &(64+216)+((p-1)^2+2(p-1)(p-2))(216+216)\\
	&=&140+108+13824+32768(p-2)+85296(p-1)+1296p^2-3456p+\\
& &2160\\
	&=&1296p^2-1312p-32
\end{eqnarray*}
\item \textbf{(v) Leap Y coindex}
\begin{eqnarray*}
	 \overline{LY}(G)&=&(p-1)(k-F(G))-(k-Y(G))\\
	&=&(p-1)(216p^2-112p-72)-(1296p^2-1312p-32)\\
	&=&(p-1)(-1080p^2+1200p+40)\\
	&=&-1080p^3+2280p^2-1160p-40.
\end{eqnarray*}
where, $LF(G) = 216p^2-112p-72.$
\end{proof}
\section{Numerical Results and Discussions}
\noindent In this section, we show numerical results for the distance-degree based topological indices for zigzag and rhombic benzenoid systems. We compute numerical tables for distance-degree based indices such as first and second leap Zagreb index, the first and second leap hyper-Zagreb index, leap Somber index, hyper leap forgotten index, leap $Y$ index, and leap $Y$ coindex, for various values of $p$ ( Table 3-6). Moreover, we plot line graphs ( Figure 4-7) for some values of $p$ to investigate the behaviour of these topological indices.
\begin{table}[h!]
\centering
\caption{Numerical Computation of $LM_{1}(G)$, $LM_{2}(G)$, $HLM_{1}(G)$, $HLM_{2}(G)$ indices for $Z_{p}$} \label{table17}
\setlength{\tabcolsep}{15pt}
\renewcommand{\arraystretch}{1.0}
\begin{tabular}{l ccccc}
\hline
[p] & $LM_{1}(G)$ & $LM_{2}(G)$ & $HLM_{1}(G)$ & $HLM_{2}(G)$\\
\hline
2 & 148 & 273 & 1068 & 4417\\
3 & 231 & 446 & 1777 & 7754\\
4 & 314 & 619 & 2486 & 11091\\
5 & 397 & 792 & 3195 & 14428\\
6 & 480 & 965 & 3904 & 17765\\
7 & 563 & 1138 & 4613 & 21102\\
8 & 646 & 1311 & 5322 & 24439\\
9 & 729 & 1484 & 6031 & 27776\\
10 & 812 & 1657 & 6740 & 31113\\
\hline
\end{tabular}
\end{table}

\begin{figure}[H]
  \centering
  \includegraphics[width=4in]{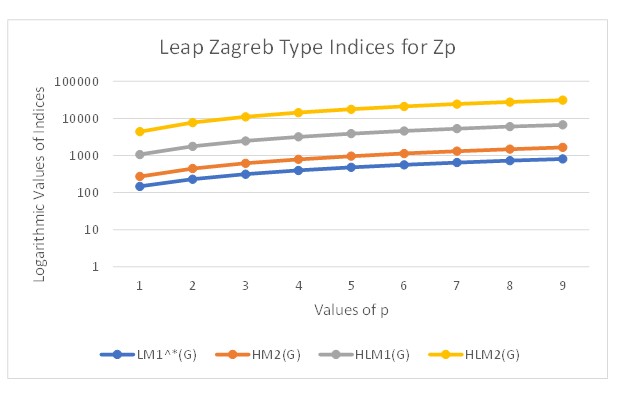}
  \caption{Leap Zagreb type indices for $z_{p}$}\label{Figure4}
\end{figure}

\begin{table}[h!]
\centering
\caption{Numerical Computation of $LSO(G)$, $LF(G)$, $HLF(G)$, $LY(G)$, $\overline{LY}(G)$ indices for $Z_{p}$} \label{table17}
\setlength{\tabcolsep}{15pt}
\renewcommand{\arraystretch}{1.0}
\begin{tabular}{l cccccc}
\hline
[p] & $LSO(G)$ & $LF(G)$ & $HLF(G)$ & $LY(G)$ & $\overline{LY}(G)$\\
\hline
2 & 55.21 & 572 & 19438 & 2380 & -1808\\
3 & 83.9 & 935 & 33891 & 4035 & -6200\\
4 & 112.59 & 1298 & 48344 & 5640 & -13176\\
5 & 141.28 & 1661 & 62797 & 7345 & -22736\\
6 & 169.97 & 2024 & 77250 & 9000 & -34880\\
7 & 198.66 & 2387 & 91703 & 10655 & -49608\\
8 & 227.35 & 2750 & 106156 & 12310 & -66920\\
9 & 256.04 & 3113 & 120609 & 13965 & -86816\\
10 & 284.73 & 3476 & 135062 & 15620 & -109296\\
\hline
\end{tabular}
\end{table}
\begin{figure}[H]
  \centering
  \includegraphics[width=4in]{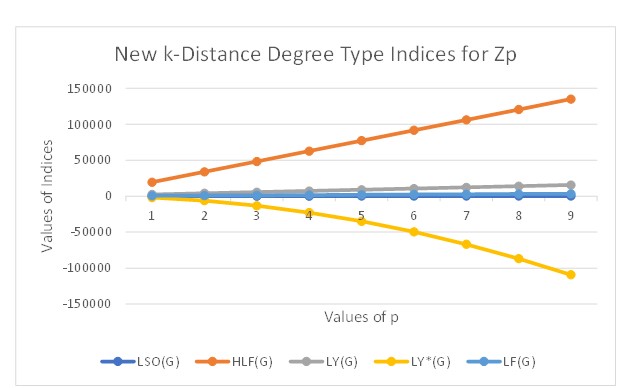}
  \caption{New k-distance degree type indices for $z_{p}$}\label{Figure5}
\end{figure}
\begin{table}[h!]
\centering
\caption{Numerical Computation of $LM_{1}(G)$, $LM_{2}(G)$, $HLM_{1}(G)$, $HLM_{2}(G)$ indices for $R_{p}$} \label{table17}
\setlength{\tabcolsep}{15pt}
\renewcommand{\arraystretch}{1.0}
\begin{tabular}{l ccccc}
\hline
[p] & $LM_{1}(G)$ & $LM_{2}(G)$ & $HLM_{1}(G)$ & $HLM_{2}(G)$\\
\hline
2 & 140 & 270 & 1108 & 5058\\
3 & 328 & 746 & 3028 & 18482\\
4 & 588 & 1438 & 5812 & 39682\\
5 & 920 & 2346 & 9460 & 68658\\
6 & 1324 & 3470 & 13972 & 105410\\
7 & 1800 & 4810 & 19348 & 149938\\
8 & 2348 & 6366 & 25588 & 202242\\
9 & 2968 & 8138 & 32692 & 262322\\
10 & 3660 & 10126 & 40660 & 330178\\
\hline
\end{tabular}
\end{table}

\begin{figure}[H]
  \centering
  \includegraphics[width=4in]{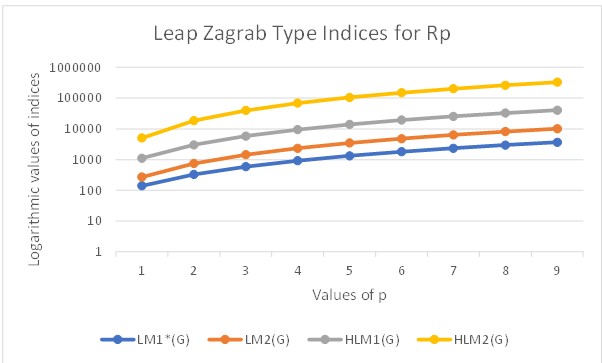}
  \caption{Leap Zagreb type indices for $R_{p}$}\label{Figure6}
\end{figure}

\begin{table}[h!]
\centering
\caption{Numerical Computation of $LSO(G)$, $LF(G)$, $HLF(G)$, $LY(G)$, $\overline{LY}(G)$ indices for $R_{p}$} \label{table17}
\setlength{\tabcolsep}{15pt}
\renewcommand{\arraystretch}{1.0}
\begin{tabular}{l cccccc}
\hline
[p] & $LSO(G)$ & $LF(G)$ & $HLF(G)$ & $LY(G)$ & $\overline{LY}(G)$\\
\hline
2 & 51.11 & 568 & 22324 & 2528 & 15400\\
3 & 110.62 & 1536 & 77620 & 7696 & 46160\\
4 & 190.91 & 2936 & 164020 & 15456 & 100920\\
5 & 291.23 & 4768 & 281524 & 25808 & 186160\\
6 & 413.83 & 7032 & 430132 & 38752 & 308360\\
7 & 556.46 & 9728 & 609844 & 54288 & 474000\\
8 & 719.87 & 12856 & 820660 & 72416 & 689560\\
9 & 904.06 & 16416 & 1062580 & 93136 & 961520\\
10 & 1109.04 & 20408 & 1335604 & 116448 & 1296360\\
\hline
\end{tabular}
\end{table}
\begin{figure}[H]
  \centering
  \includegraphics[width=3.4in]{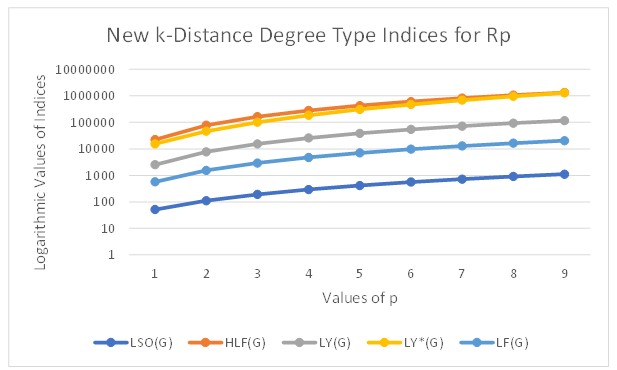}
  \caption{New k-distance degree type indices for $R_{p}$}\label{Figure7}
\end{figure}
\noindent From Figures 4, 5, 6, and 7, we determined that as $p$ increases there is an increase in all the indices except leap $Y$ coindex of $Z_p$. The leap $Y$ coindex of $Z_p$ decreases as the values of $p$ increasing.\\

\noindent The modified first Zagreb connection index is a chemical descriptor that first appears in a $1972$ for the calculation of the total electron energy of alternant hydrocarbons \cite{S}. According to research, the first Zagreb connection index performs better as a predictor of entropy, enthalpy of vaporisation, standard enthalpy of vaporisation, and acentric factor of octane isomers than the other Zagreb connection indices.\\

\noindent From Table $3$ and Figure $4$, we analyze that for a zigzag benzenoid system the second hyper-leap Zagreb index attains maximum predictive ability than the other three leap Zagreb indices.\\

\noindent A dataset of octane isomers was used to assess the forgotten index's predictive ability. Moreover, it was discovered that the forgotten-predictive index's power is pretty comparable to first zagreb index \cite{17}. The zigzag benzenoid system and the rhombic benzenoid system have higher leap and hyper leap forgotten index then the first leap and first hyper leap zagreb index.\\

\noindent In mathematical chemistry, the somber indices are very predictive. These can be acquired for the graph energy upper and lower bounds, as well as the relationship between the Sombor index and graph energy for molecular graph types \cite{A}. The leap somber index for these two molecular graphs increases as $p$ increases.\\

\noindent The Y-coindex is one of the most useful correlation indices for comprehending the physicochemical properties of octane isomers \cite{21}. From Table $4$ and Figure $5$, we analyse that the leap Y- index attains maximum value as $p$ increases and leap Y- coindex attains minimum value as $p$ increases for zigzag benzenoid system.\\ 
\noindent  Moreover, for rhombic benzenoid system, the leap Y-index and leap Y-coindex attains maximum value as $p$ increases (see Table $6$ \& Figure $7$).\\

\section{Conclusion and Future Work}
\noindent The topological indices can be used to identify the physicochemical characteristics of chemical compounds. In this work, we utilize edge partition technique for obtaining results related to distance-degree based topological indices such as first and second leap Zagreb indices, first and second leap hyper-Zagreb indices for the zigzag and rhombic benzenoid system. Furthermore, the expression for new $k$-distance degree-based topological indices such as leap Somber index, hyper leap forgotten index, leap $Y$ index and leap $Y$ coindex of the zigzag and rhombic benzenoid system are derived. Also, we obtained their numerical results and plotted the graphs of these indices for some values of $p$ in order to determine the significance of the physicochemical characteristics of the zigzag and rhombic benzenoid systems.\\

 We are mention some possible directions for future research as multiplicative k-distance degree-based topological indices for certain molecular graphs and to determine the predictive ability of physio-chemical characteristics in the case of a dataset of octane isomers. Also, one can find the bounds for the newly defined $k$-distance degree-based topological indices.\\
 
\noindent \textbf{Novelty statement}\\\\Graphs invariants plays an important role to analyze the abstract structures of molecular graph of chemical compounds. Topological index is one of the graph invariant that describes the topology of a chemical compounds based on a molecular structure. Various distance-degree based topological indices of chemical graphs has been recently computed. But there are still many chemical compounds for which distance-degree based topological indices has not been found yet. Therefore, in this article, we compute some new distance-degree based topological indices for the molecular graph of certain classes of benzenoid system.
Furthermore, we compute their numerical results and plot their line graphs for the comparision of these indices. \\\\
\noindent \textbf{Data Availibility}\\
No data were used to support the finding of this study.\\

\noindent \textbf{Conflicts of Interest}\\
There are no conflicts of  interest declared by the authors.\\

\noindent \textbf{Conflicts of Interest:}\\
There are no conflicts of interest to be declared by the authors.\\
 
\noindent \textbf{Funding:}\\
Not Applicable. No funds have been received.\\

\noindent \textbf{Authors contributions:}\\ First draft was prepared by Sohan Lal and Vijay Kumar Bhat; figures have been prepared by Sohan Lal and Karnika Sharma; all authors have reviewed the final draft.\\

\end{document}